\documentclass[10 pt, leqno]{article}

\baselineskip 18pt
\date{}
\usepackage{amssymb, amsbsy, amsmath, amsfonts, amssymb, amscd, mathrsfs, amsthm}

\usepackage[english]{babel}
\usepackage[T1]{fontenc}
\usepackage{indentfirst}
\usepackage{color}




\makeatletter
\@addtoreset{equation}{section}
\makeatother

\newtheorem{statement}{}[section]
\newtheorem{theorem}[statement]{Theorem}
\newtheorem{lemma}[statement]{Lemma}

\newcommand\C{\mathbb C}

\newcommand\R{\mathbb R}
\newcommand\T{\mathbb T}
\newcommand\D{\mathbb D}
\newcommand\Z{\mathbb Z}
\newcommand\e{{\rm e}}

\newcommand\eps{\varepsilon}
\newcommand\ind{1 \kern - 0.28 em {\rm I}}
\newcommand\dis{\displaystyle}
\renewcommand \Re{{\mathfrak R}{\rm e}\,}
\renewcommand \Im{{\mathfrak I}{\rm m}\,}
\newcommand\converge{\mathop{\longrightarrow}\limits}

\let\amphi=\phi
\let\phi=\varphi

\newcommand\tq{\, ; \ }

\title{\bf On composition operators on the Wiener algebra of Dirichlet series}
\author{\it Daniel~Li, \it Herv\'e~Queff\'elec, Luis~Rodr{\'\i}guez-Piazza}

\date{\footnotesize \today}

\begin{document}

\maketitle

\noindent {\bf Abstract.} We show that the symbol of a bounded composition operator on the Wiener algebra of Dirichlet series does not need to belong to this algebra. 
Our example even gives an absolutely summing (hence compact) composition operator. 

\medskip

\noindent {\bf MSC 2010} primary:  47B33; secondary: 30B50
\smallskip

\noindent {\bf Key-words} Composition operator; Dirichlet series

\section {Introduction} 

In \cite{BFLQ} (see also \cite{FLQ}), composition operators on the Wiener algebra ${\mathcal A}^+$ of all absolutely convergent Dirichlet series were studied. 
\smallskip

Recall that ${\mathcal A}^+$ is the space of all analytic maps $f \colon \C_{\, 0} \to \C$ which can be written 
\begin{displaymath}
f (s) = \sum_{n = 1}^\infty a_n n^{- s} \qquad \text{with} \qquad  
\| f \|_{{\mathcal A}^+} := \sum_{n= 1}^\infty |a_n| < + \infty \, , 
\end{displaymath}
where, for $\theta \in \R$, we note $\C_{\, \theta} = \{ z \in \C \tq \Re z >\theta \}$. 
If $\amphi \colon \C_{\, 0} \to \C_{\, 0}$ is an analytic function, the composition operator $C_\amphi \colon {\mathcal A}^+ \to {\mathcal A}^+$ 
of symbol $\amphi$ on this space is defined as $C_\amphi (f) = f \circ \amphi$. Gordon and Hedenmalm, for the Hilbert space ${\mathscr H}^2$, showed in 
\cite{GH} that such a symbol has necessarily the form 
\begin{equation} \label{condition GH}
\amphi (s) = c_0 \, s + \phi (s) \, , 
\end{equation}
where $c_0 \geq 0$ is an integer and $\phi$ is a convergent Dirichlet series with values in $\C_{\, 0}$, that is 
$\phi \colon \C_{\, 0} \to \C_{\, 0}$ is an analytic function which can be written $\phi (s) = \sum_{n = 1}^\infty c_n n^{- s}$ for $\Re s$ large enough. 
Moreover, this Dirichlet series is uniformly convergent in $\C_{\, \eps}$ for all $\eps > 0$ (\cite[pages 1625--1626 and Theorem~3.1]{Q-S}; see also 
\cite[Theorem~8.4.1, page 245]{Herve-Martine}). 

It is shown in \cite[Theorem~2.3]{BFLQ} that $C_\amphi$ is bounded on ${\mathcal A}^+$ if and only if 
$\sup_{N \geq 1} \| N^{- \amphi} \|_{{\mathcal A}^+} < + \infty$, and that it is compact if and only if 
$ \| N^{- \amphi} \|_{{\mathcal A}^+} \converge_{N \to \infty} 0$. Note that it is actually proved in \cite[Theorem~4]{GH} that 
if $N^{- \amphi}$ is a Dirichlet series for all $N \geq 1$, then $\amphi$ as necessarily the form \eqref{condition GH}. Then 
$\| N^{- \amphi} \|_{{\mathcal A}^+} = \| N^{- \phi} \|_{{\mathcal A}^+}$, so $c_0$ plays no role, so we assume in the sequel that $c_0 = 0$. 
\smallskip

When $X$ is a Banach space of analytic functions that contains the identity map $u \colon z \mapsto z$, and $C_\amphi \colon X \to X$ is a composition operator, 
then $\amphi = C_\amphi (u)$ belongs to $X$. For $X = {\mathcal A}^+$, it is not the case, so it is natural to ask if $\phi \in {\mathcal A}^+$ when 
$C_\phi \colon {\mathcal A}^+ \to {\mathcal A}^+$ is a bounded composition operator. The object of this short note is to give a negative answer 
(Theorem~\ref{main theorem}). 
\smallskip

Let us point out that it is proved in \cite[Proposition~2.9]{BFLQ} that $\phi \in {\mathcal A}^+$ does not suffice to have a bounded composition operator on 
${\mathcal A}^+$; the symbol is even a Dirichlet polynomial 
\begin{displaymath}
\phi (s) = c_1 + c_r r^{-s} + c_{r^2} r^{- 2 s} 
\end{displaymath}
where $r \geq 2$ is an integer and $c_r, c_{r^2} > 0$. For such a Dirichlet polynomial, it is proved that 
$C_\phi$ is not bounded if $\Re c_1 < \frac{(c_r)^2}{8 \, c_{r^2}}$ and $c_r \leq 4 \, c_{r^2}$ (for example, $c_r = 4$, $c_{r^2} = 1$ and $\Re c_1 < 2$). 

\section{Main result} 

Recall that $\phi \colon \C_{\, 0} \to \C$ is a convergent Dirichlet series, if $\phi$ is analytic on $\C_{\, 0}$ and we can write 
$\phi (s) = \sum_{n = 1}^\infty a_n n^{ - s}$ for $\Re s$ large enough.  

\begin{theorem} \label{main theorem}
There exists a convergent Dirichlet series $\phi$ inducing a bounded composition operator $C_\phi \colon {\mathcal A}^+ \to {\mathcal A}^+$, but 
such that $\phi \notin {\mathcal A}^+$. Moreover, $\phi \in {\mathscr H}^p$ for all $p < \infty$ and $C_\phi$ is compact and absolutely summing. 
\end{theorem}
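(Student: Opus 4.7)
\medskip

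\noindent\textbf{Proof plan.} The plan is to exhibit an explicit symbol that depends only on the prime $2$. Concretely, take
\begin{equation*}
\phi (s) = a - \log (1 - 2^{-s}) = a + \sum_{n \geq 1} \frac{2^{-ns}}{n} \, ,
\end{equation*}
where $a > 1 + \log 2$ is a real constant. First I would verify the easy properties: the Dirichlet series converges absolutely on $\C_{\, 0}$; its coefficients $1/n$ are not summable, so $\phi \notin \mathcal{A}^+$; since $|1 - 2^{-s}| < 2$ on $\C_{\, 0}$ one has $\Re \phi (s) \geq a - \log 2 > 0$, so $\phi (\C_{\, 0}) \subset \C_{\, 0}$; and the Bohr lift of $\phi$ is the single-variable function $z_1 \mapsto a - \log (1 - z_1)$ on $\T^\infty$, whose only singularity (at $z_1 = 1$) is logarithmic, so $\phi \in \mathscr{H}^p$ for every $p < \infty$.

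The heart of the argument is a direct computation. Since $\phi$ depends on $s$ only through $2^{-s}$, one has
\begin{equation*}
N^{-\phi (s)} = N^{-a} (1 - 2^{-s})^{\log N} = N^{-a} \sum_{n \geq 0} (-1)^n \binom{\log N}{n} 2^{-ns} ,
\end{equation*}
a Dirichlet series supported on powers of $2$, whence
\begin{equation*}
\| N^{-\phi} \|_{\mathcal{A}^+} = N^{-a} \sum_{n \geq 0} \left| \binom{\log N}{n} \right| .
\end{equation*}
The key inequality I would establish is
\begin{equation*}
\sum_{n \geq 0} \left| \binom{\alpha}{n} \right| \leq 2^{\alpha + 1} \qquad (\alpha \geq 0) .
\end{equation*}
For $\alpha \in [0,1)$ this follows from a sign analysis of $\binom{\alpha}{n}$ combined with $\sum_n (-1)^n \binom{\alpha}{n} = (1-1)^\alpha = 0$, giving the value $2$ exactly. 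For $\alpha \geq 1$ the Pascal identity $\binom{\alpha}{n} = \binom{\alpha - 1}{n} + \binom{\alpha - 1}{n-1}$ and the triangle inequality yield $\sum_n |\binom{\alpha}{n}| \leq 2 \sum_n |\binom{\alpha - 1}{n}|$, and iterating $\lfloor \alpha \rfloor$ times finishes the bound.

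Combining the two steps gives $\| N^{-\phi} \|_{\mathcal{A}^+} \leq 2 \, N^{\log 2 - a}$, and the choice $a > 1 + \log 2$ makes this summable over $N$. Since $C_\phi (e_N) = N^{-\phi}$ on the canonical basis $e_N = N^{-s}$ of $\mathcal{A}^+ \cong \ell^1$, the representation $C_\phi = \sum_N e_N^* \otimes N^{-\phi}$ is then a nuclear decomposition with absolutely summable factors; hence $C_\phi$ is nuclear, and in particular absolutely summing, compact, and bounded. The main obstacle I anticipate is the binomial sum estimate; everything else reduces to routine verifications once the explicit symbol is fixed.
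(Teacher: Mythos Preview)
Your argument is correct and takes a genuinely different route from the paper's. The paper chooses $\phi(s)=f(2^{-s})$ with $f$ a conformal map of $\D$ onto the half-strip $\{\Re z>1,\ |\Im z|<\pi\}$; it then shows $(N^{-f})'\in H^1$ by computing the boundary length of $N^{-f}(\partial\D)$, invokes Hardy's inequality to get $N^{-f}\in A^+$ with $\|N^{-\phi}\|_{\mathcal A^+}\lesssim (\log N)/N$, and finally, since only $\sum_N\|N^{-\phi}\|_{\mathcal A^+}^2<\infty$ is available, factors $C_\phi$ through $\ell_2$ and appeals to the Pietsch/Grothendieck fact that the inclusion $\ell_1\hookrightarrow\ell_2$ is $1$-summing.

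Your choice $\phi(s)=a-\log(1-2^{-s})$ makes $N^{-\phi}$ an explicit binomial series, and the elementary bound $\sum_n\big|\binom{\alpha}{n}\big|\le 2^{\alpha+1}$ (which is correct: for $0<\alpha<1$ the terms $(-1)^n\binom{\alpha}{n}$ with $n\ge1$ all have the same sign and $\sum_n(-1)^n\binom{\alpha}{n}=0$ by Abel, while Pascal's rule doubles the sum when $\alpha$ increases by $1$) yields $\|N^{-\phi}\|_{\mathcal A^+}\le 2\,N^{\log 2-a}$. With $a>1+\log 2$ this is summable in $N$, so $C_\phi=\sum_N e_N^\ast\otimes N^{-\phi}$ is nuclear outright; no Hardy inequality, no conformal geometry, and no Grothendieck-type factorization are needed. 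The trade-off is that the paper's geometric method is more flexible (it is reused verbatim in the variant where $\phi\notin\mathscr H^p$ for a prescribed $p$), whereas your computation is tied to the specific symbol. One small point worth making explicit in a write-up: the nuclear series first defines a bounded operator $T$, and one checks $T f=f\circ\phi$ for every $f=\sum a_N e_N\in\mathcal A^+$ by absolute convergence (using $\Re\phi>0$), so that $C_\phi=T$ is indeed well defined and bounded before one speaks of its compactness or summing properties.
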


Let us recall the definition of the Hardy space ${\mathscr H}^p$ of Dirichlet series, following \cite{Bayart}. That uses the Bohr representation of 
Dirichlet series. Let $(p_j)_{j \geq 1}$ be the increasing sequence of all the prime numbers (so $p_1 = 2$, $p_2 = 3$, $p_3 = 5$, and so on). If 
$n = p_1^{\alpha_1} p_2^{\alpha_2} \cdots p_r^{\alpha_r}$ is the decomposition of the integer $n$ in prime factors, to the 
Dirichlet series $\phi (s) = \sum_{n = 1}^\infty a_n n^{- s}$ is associated the Taylor series 
$(\Delta \phi) (z) = \sum_{{\boldsymbol \alpha}} a_n z_1^{\alpha_1} z_2^{\alpha_2} \cdots z_r^{\alpha_r}$, where 
${\boldsymbol \alpha} = (\alpha_1, \alpha_2, \ldots, \alpha_r, 0, 0, \ldots)$. Due to Kronecker's theorem, $\phi$ is bounded if and only if $\Delta \phi$ 
is bounded, and $\| \phi \|_\infty = \| \Delta \phi \|_\infty$. The Hardy space ${\mathscr H}^p$ is the space of all convergent Dirichlet 
series $\phi$ for which $\Delta \phi$ belongs to the Hardy space $H^p (\T^\infty)$, with the norm 
$\| \phi \|_{{\mathscr H}^p} = \| \Delta \phi \|_{H^p}$. 

Note that ${\mathcal A}^+$ is isometrically isomorphic, by this map $\Delta$, to the Wiener algebra $A^+ (\T^\infty)$. 
\smallskip

Let us also recall that a bounded linear map $u \colon X \to Y$ between two Banach spaces $X$ and $Y$ is $r$-summing ($1 \leq r < \infty$) if there is a 
positive constant $K$ such that 
\begin{displaymath}
\bigg( \sum_{k = 1}^n \| u (x_k) \|^r \bigg)^{1 / r} \leq K \, \sup_{\xi \in B_{X^\ast}} \bigg( \sum_{k = 1}^n | \xi (x_k) |^r \bigg)^{1 / r} 
\end{displaymath}
for all $x_1, \ldots, x_n \in X$, $n \geq 1$, and where $B_{X^\ast}$ is the unit ball of $X^\ast$. For $r = 1$, these operators are also said absolutely 
summing. 

\begin{proof} [Proof of the theorem] 
We are going to take a symbol $\phi$ of the form $\phi (s) = \sum_{k = 1}^\infty c_k \, 2^{- k s} = f (2^{- s})$, where $f \colon \D \to \C_{\, 0}$ is an 
analytic function such that $\sup_{N \geq 1} \| N^{- f} \|_{A^+ (\D)} < + \infty$, but $f \notin H^\infty$. 

Recall that $A^+ = A^+ (\D)$ is the space of all analytic functions $u \colon \D \to \C$ such that $u (z) = \sum_{n = 0}^\infty a_n z^n$, with 
$\| u \|_{A^+ (\D)} := \sum_{n = 0}^\infty | a_n | < + \infty$. 
\smallskip

We choose for $f$ a conformal map sending the unit disk $\D$ onto the half-strip 
\begin{displaymath}
R = \{ z \in \C \tq \Re z > 1 \text{ and } |\Im z | < \pi \} \, . 
\end{displaymath}
Explicitly, we take $f = \tau_1 \circ L \circ h \circ c \circ T$, where 
\begin{displaymath}
\begin{array}{ll}
\dis T (z) = \frac{1 + z}{1 - z} \, ; & \dis c (z) = \e^{i \pi / 4} \sqrt{z} \, ; \smallskip \\ 
\dis h (z) = \frac{i z + 1}{z + i} \, ; & \dis L (z) = - 2 \log z \, ;
\end{array}
\end{displaymath}
and $\tau_1 (z) = z + 1$. $T$ maps the unit disk $\D$ onto the right-half plane; then $c$ sends the right-half plane onto the first quadrant; $h$ the first 
quadrant onto the right-half of $\D$; $L$ this right-half of $\D$ onto the half-strip $\{ | \Im z | < \pi , \Re z > 0 \}$, and finally the translation $\tau_1$ 
sends this half-strip onto the half-strip $R$.

This map is clearly not in $H^\infty$, but, for every $\beta \in (0, \pi / 2)$, there is a positive constant $C_\beta$ such that $R + C_\beta$ is contained 
in the angular sector of vertex $0$ and of opening $\beta$; it follows (see \cite[Theorem~3.2]{Duren}) that $f + C_\beta \in H^p$ for all $p < {\pi / \beta}$; 
so $f \in H^p$ for all $p < \infty$. We can also see that 
\begin{displaymath}
f (\e^{i t}) = \alpha \, \log | \, i - \e^{i t} | + g (t) \, , 
\end{displaymath}
with $g \in L^\infty$ and $\alpha$ a constant, so that $f \in H^{\Psi_1}$, the Hardy-Orlicz space attached to the Orlicz function 
$\Psi_1 (x) = \e^x - 1$, and 
\begin{displaymath}
\| f \|_p = O \, (p) 
\end{displaymath}
as $p$ goes to infinity. 

Since $f \notin H^\infty$, we a fortiori have $f \notin A^+$, so $\phi \notin {\mathcal A}^+$. 
However $\phi \in {\mathscr H}^p$ for all $p < \infty$ since $f \in H^p$ for these values of $p$. 
\smallskip

We now have to show that $N^{- \phi} \in {\mathcal A}^+$, i.e. $N^{- f} \in A^+$, for all $N \geq 1$. This is clear for $N = 1$. For $N \geq 2$, we have 
$N^{- f} = \exp \, (- f \log N)$, and the range of $f \log N$ is the half-strip
\begin{displaymath}
R_N = \{ z \in \C \tq \Re z > \log N \text{ and } | \Im z | < \pi \log N \} \, .
\end{displaymath}
Under the exponential map $\e^{- z}$, $\partial R_N$ is transformed as follows: 

1) the vertical segment $[\log N - i \pi \log N, \log N + i \pi \log N]$ is sent onto the circle of center $0$ and radius $1 / N$, browsed $\log N $ times; 

2) the half-line $\{ t \, \log N + i \pi \log N \tq t \geq 1 \}$ is one-to-one mapped onto the radius $(0, \e^{- i \pi \log N} / N ]$; 

3) the half-line $\{ t \, \log N - i \pi \log N \tq t \geq 1 \}$ is one-to-one mapped onto the radius $(0, \e^{i \pi \log N} / N ]$. 

Hence, if $F_N = N^{- f}$, we have 
\begin{displaymath}
\int_0^{2 \pi} | F_N ' (\e^{i t}) | \, dt = \frac{2}{N} + 2 \pi \, \frac{\log N}{N} < + \infty \, , 
\end{displaymath}
so $(N^{- f})' \in H^1$.
By Hardy's inequality 
(see \cite[Corollary page~48]{Duren}), it follows that $N^{- f} \in A^+$, and there exists a positive constant $C$ such that 
$\| N^{- f} \|_{A^+} \leq C \, \log N / N$. In particular, $\| N^{ - \phi} \|_{{\mathcal A}^+} = \| N^{ - f} \|_{A^+} \converge_{N \to \infty} 0$, 
so $C_\phi \colon {\mathcal A}^+ \to {\mathcal A}^+$ is compact, by \cite[Theorem~2.3]{BFLQ}. 
\smallskip

To end the proof, remark that, writing $u_N (s) = N^{- s}$, we have 
\begin{displaymath}
\sum_{N = 2}^\infty \| C_\phi (u_N) \|_{{\mathcal A}^+}^2 = \sum_{N = 2}^\infty \| N^{ - \phi} \|_{{\mathcal A}^+}^2 
\leq C^2 \sum_{N = 2}^\infty \frac{(\log N)^2}{N^2} < + \infty \, . 
\end{displaymath}
Hence, using the Cauchy-Schwarz inequality, we can define a bounded linear operator $S \colon \ell_2 \to {\mathcal A}^+$ by sending the $N$-th vector $e_N$ 
of the canonical basis of $\ell_2$ to $N^{- \phi}$, and we have the factorization $C_\phi = S T$, where $T \colon {\mathcal A}^+ \to \ell_2$ is defined by setting 
$T (u_N) = e_N$. But ${\mathcal A}^+$ is isometrically isomorphic to $\ell_1$, and the canonical injection from $\ell_1$ to $\ell_2$ is $1$-summing (this 
was first remarked by Pietsch \cite[\S\,1, Satz~10]{Pietsch}, and it is a particular case of the Grothendieck theorem). Let us recall why that holds. 
To each $(\alpha_k)_{k \geq 1} \in \ell_1$, we associate the $L^\infty$ function 
$\sum_{k = 1}^\infty a_k r_k$, where $(r_k)_{k \geq 1}$ is the sequence of the Rademacher functions on $[0, 1]$; the canonical injection from 
$L^\infty (0, 1)$ into $L^1 (0, 1)$ is absolutely summing (see \cite[top of page 11]{Pisier}, or \cite[Remark~8.2.9]{Albiac-Kalton}) and, 
by Khintchin's inequalities (see \cite[Chapitre~0, Th\'eor\`eme~IV.1]{LQ}, or \cite[Chapter~1, Theorem~IV.1]{LQ-bis}), the $L^1$-norm of 
$\sum_{k = 1}^\infty \alpha_k r_k$ is equivalent to $\big( \sum_{k = 1}^\infty |\alpha_k|^2 \big)^{1 / 2}$. 
\smallskip

It follows that $C_\phi$ is $1$-summing. 
\end{proof}

Note that, since ${\mathcal A}^+ \cong \ell_1$ has the Schur property, and since every $q$-summing operator is weakly compact, 
every $q$-summing operator into ${\mathcal A}^+$ is compact. 
\medskip

A slight modification of the proof gives a variant of Theorem~\ref{main theorem}.

\begin{theorem}
For every $p \in (1, \infty)$, there exists a convergent Dirichlet series $\phi$ such that $\phi \in {\mathscr H}^q$ for all $q < p$, but $\phi \notin {\mathscr H}^p$, 
and such that $\phi$ induces a bounded composition operator $C_\phi \colon {\mathcal A}^+ \to {\mathcal A}^+$. Moreover, $C_\phi$ is compact and is 
absolutely summing.  
\end{theorem}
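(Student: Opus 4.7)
The plan is to repeat the construction of Theorem~\ref{main theorem}, replacing the half-strip $R$ by an angular sector of opening $2\alpha = \pi/p$, so that the auxiliary map $f$ lies in $H^q(\D)$ exactly for $q < p$ but not for $q = p$.

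Concretely, I would set $\alpha = \pi/(2 p) \in (0, \pi/2)$ and take $f \colon \D \to 1 + S_\alpha$ to be a conformal map onto the shifted sector
\begin{displaymath}
1 + S_\alpha := \{1 + r \, \e^{i \theta} \tq r > 0, \ |\theta| < \alpha \},
\end{displaymath}
for instance $f(z) = 1 + \bigl( (1+z)/(1-z) \bigr)^{2 \alpha / \pi}$. Then define $\phi (s) = f (2^{-s})$, which is a convergent Dirichlet series (with coefficients supported on powers of $2$) satisfying $\phi (\C_{\, 0}) \subset 1 + S_\alpha \subset \C_{\, 0}$; as in the original proof, one has $\| \phi \|_{{\mathscr H}^q} = \| f \|_{H^q (\D)}$ and $\| N^{- \phi} \|_{{\mathcal A}^+} = \| N^{- f} \|_{A^+}$, so everything is reduced to one-variable Hardy-space estimates on $\D$.

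Next I would check that $f \in H^q (\D)$ if and only if $q < p$ by the standard boundary-value computation: on $\partial \D$, $|f (\e^{i t}) - 1| = |\cot (t/2)|^{2 \alpha / \pi} \sim (2/|t|)^{2 \alpha / \pi}$ near $t = 0$, so $|f - 1|^q$ is integrable precisely when $q \, (2\alpha/\pi) < 1$, i.e.\ when $q < \pi / (2\alpha) = p$. This yields $\phi \in {\mathscr H}^q$ for every $q < p$ and $\phi \notin {\mathscr H}^p$. The bound on $\| N^{- f} \|_{A^+}$ is obtained by the same length computation as in Theorem~\ref{main theorem}: the two boundary rays $\gamma_\pm (t) = 1 + t \, \e^{\pm i \alpha}$ of $1 + S_\alpha$ are sent by $z \mapsto N^{- z}$ to two logarithmic spirals starting at $1/N$ and accumulating at $0$, of total arclength
\begin{displaymath}
2 \int_0^\infty \log N \cdot N^{- 1 - t \cos \alpha} \, dt = \frac{2}{N \cos \alpha} \, .
\end{displaymath}
Hence $(N^{- f})' \in H^1$, and Hardy's inequality gives $\| N^{- \phi} \|_{{\mathcal A}^+} \leq C_\alpha / N$. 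The conclusion then coincides with the last paragraph of the proof of Theorem~\ref{main theorem}: $\| N^{- \phi} \|_{{\mathcal A}^+} \to 0$ ensures compactness via \cite[Theorem~2.3]{BFLQ}, while $\sum_N \| N^{- \phi} \|_{{\mathcal A}^+}^2 < + \infty$ combined with the factorization $C_\phi = S T$ through $\ell_2$ yields absolute summingness.

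The main technical point is checking that the sector geometry still supports the Hardy-inequality step: the corner of $1 + S_\alpha$ at the vertex $1$ and the preimage of infinity at $z = 1$ must not generate non-integrable boundary behaviour of $(N^{- f})'$. The corner exponent $2\alpha/\pi < 1$ keeps $f'$ locally integrable near $z = -1$, and near $z = 1$ the super-exponential decay of $|N^{- f}|$ (driven by $\Re f \sim \cos \alpha \, (2/|t|)^{2 \alpha/\pi}$) dominates the power blow-up of $|f'|$. Both estimates are elementary, so the argument goes through unchanged.
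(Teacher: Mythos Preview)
Your proposal is correct and follows essentially the same strategy as the paper: pick a conformal $f$ onto a planar region whose angular opening at infinity is $\pi/p$, set $\phi(s)=f(2^{-s})$, read off the $\mathscr H^q$ membership from the Hardy-class behaviour of $f$, and control $\|N^{-f}\|_{A^+}$ via Hardy's inequality by computing the arclength of the image of the boundary under $z\mapsto N^{-z}$.

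The only difference is the target domain. The paper maps $\D$ onto the intersection of the sector $\{|\arg z|<\pi/(2p)\}$ (vertex at $0$) with the half-plane $\C_{\,1}$; its boundary has two rays plus a vertical segment on $\Re z=1$, and the latter produces the extra $\dfrac{\tan\beta}{\pi}\,\dfrac{\log N}{N}$ term in the length, yielding $\|N^{-\phi}\|_{{\mathcal A}^+}\lesssim \log N/N$. You instead take the translated sector $1+S_\alpha$ (vertex at $1$), whose boundary consists only of the two rays; hence your length is just $2/(N\cos\alpha)$ and you obtain the slightly sharper $\|N^{-\phi}\|_{{\mathcal A}^+}\lesssim 1/N$. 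Your explicit formula $f(z)=1+\bigl((1+z)/(1-z)\bigr)^{2\alpha/\pi}$ also lets you verify $f\in H^q\Leftrightarrow q<p$ by direct boundary integration, whereas the paper quotes Hansen for this. Both choices lead to the same compactness and absolute-summing conclusions, so the two arguments are interchangeable; yours is marginally cleaner, the paper's is marginally closer in shape to the half-strip of Theorem~\ref{main theorem}.
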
 
\begin{proof} 
We replace the conformal map $f$ of Theorem~\ref{main theorem} by a conformal map $f$ from $\D$ onto the intersection of the angular sector 
\hbox{$\{ z \in \C_{\, 0} \tq | \arg z | < \pi / 2 p \}$} with the half-plane $\C_{\, 1}$. We have $f \notin H^p$ though $f \in H^q$ for all $q < p$ 
(see \cite[top of page 237]{Hansen}). 
We set $\phi (s) = f (2^{- s})$ for $\Re s > 0$. We have $\phi \in {\mathscr H}^q$ for all $q < p$, but $\phi \notin {\mathscr H}^p$. 

For all $N \geq 1$, we have $N^{- f} \in A^+$. This is clear for $N = 1$. For $N \geq 2$, let $\beta = \pi / 2 p$ and 
$\gamma_{\pm} (t) = \exp ( - \, \e^{\pm i \beta} t)$, with $t \geq \log N / \cos \beta$, then the boundary of the range of $F_N = N^{- f}$ is the union of 
$\gamma_+$ and $\gamma_-$, and of the circle of radius $1 / N$ browsed $(1 / \pi) \, (\tan \beta) \, \log N$ times. 
Since 
\begin{displaymath}
\int_{\log N / \cos \beta}^{+ \infty} | \gamma_\pm ' (t) | \, dt = \int_{\log N / \cos \beta}^{+ \infty} \e^{- ( \cos \beta) t} \, dt 
= \frac{1}{\cos \beta} \, \frac{1}{N} \, \raise 1 pt \hbox{,}
\end{displaymath}
we get that 
\begin{displaymath}
\int_0^{2 \pi} |F_N ' (\e^{i t}) | \, dt = \frac{2}{\cos \beta} \, \frac{1}{N} + \frac{\tan \beta}{\pi} \, \frac{\log N}{N} < + \infty \, ,
\end{displaymath}
so $F_N ' \in H^1$ and $F_N = N^{- f} \in A^+$. 
\smallskip

Moreover, $\| N^{- \phi} \|_{{\mathcal A}^+} = \| N^{- f} \|_{A^+} \lesssim \log N / N \converge_{N \to \infty} 0$, so $C_\phi$ is compact on 
${\mathcal A}^+$. 
\smallskip

Since $\sum_{N = 1}^\infty \| N^{- \phi} \|_{{\mathcal A}^+}^2 < + \infty$, we get, as in the proof of Theorem~\ref{main theorem}, that 
$C_\phi$ is $1$-summing.
\end{proof}
%

\section{Another result} 

Let us remark that the example of \cite[Proposition~2.9]{BFLQ} quoted in the Introduction is a Dirichlet polynomial $\phi$ such that 
$N^{- \phi} \in {\mathcal A}^+$ for all $N \geq 1$, though the associated composition operator $C_\phi$ is not bounded from ${\mathcal A}^+$ into itself. 

\begin{theorem} \label{limit theorem}
For any non-negative number $A \in \R_+$, there exists a convergent Dirichlet series $\phi$ such that $\phi (\C_{\, 0}) \subseteq \C_{\, A}$, but such that, for any 
$N \geq 2$, we have $N^{- \phi} \notin {\mathcal A}^+$. 

In particular, the composition operator $C_\phi$ is not bounded from ${\mathcal A}^+$ into itself. 
\end{theorem}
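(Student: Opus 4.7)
The plan is to reuse the ansatz $\phi(s) = f(2^{-s})$ for a well chosen analytic $f \colon \D \to \C_{\, A}$. Because $\phi$, and hence every $N^{-\phi}$, is then supported on powers of $2$, one has the identity
\[
\| N^{-\phi} \|_{{\mathcal A}^+} = \| N^{-f} \|_{A^+(\D)}.
\]
Producing a $\phi$ as required therefore reduces to producing $f \colon \D \to \C_{\, A}$ such that $N^{-f} \notin A^+(\D)$ for every $N \geq 2$; the inclusion $\phi(\C_{\, 0}) \subseteq \C_{\, A}$ is automatic from $f(\D) \subseteq \C_{\, A}$, and the failure of boundedness of $C_\phi$ on ${\mathcal A}^+$ then follows from the criterion \cite[Theorem~2.3]{BFLQ}.

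For the concrete construction I would take the Cayley type conformal map
\[
f(z) = A + \frac{1+z}{1-z},
\]
which sends $\D$ onto $\C_{\, A}$. Its geometric expansion $(1+z)/(1-z) = 1 + 2 \sum_{k \geq 1} z^k$ gives
\[
\phi(s) = (A+1) + 2 \sum_{k=1}^\infty 2^{-ks},
\]
a Dirichlet series absolutely convergent on $\C_{\, 0}$ with range contained in $\C_{\, A}$.

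The heart of the argument is the boundary behavior of $N^{-f}$ at $z = 1$. On the unit circle, $f(\e^{it}) = A + i \cot(t/2)$, so for every $N \geq 2$,
\[
N^{-f(\e^{it})} = N^{-A} \, \e^{-i (\log N) \cot(t/2)},
\]
whose argument oscillates without limit as $t \to 0^+$. Since every element of $A^+(\D)$ extends to a continuous function on $\overline{\D}$ (its Taylor series converges absolutely, hence uniformly, there), this rules out $N^{-f} \in A^+(\D)$, finishing the proof. I do not anticipate a serious obstacle here; the only point to verify carefully beyond this boundary computation is the norm identity $\| N^{-\phi} \|_{{\mathcal A}^+} = \| N^{-f} \|_{A^+(\D)}$, which is immediate from the fact that the Dirichlet coefficients of $N^{-\phi}$ are exactly the Taylor coefficients of $N^{-f}$.
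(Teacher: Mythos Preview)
Your proof is correct. Both your argument and the paper's rest on the same obstruction: any element of ${\mathcal A}^+$ (resp.\ $A^+(\D)$) extends continuously to the closed half-plane (resp.\ closed disk), so it suffices to produce a $\phi$ for which $N^{-\phi}$ fails to have a boundary limit at some point. The difference lies in the packaging and the choice of example. The paper first isolates the obstruction as a standalone lemma --- if $N^{-\phi}\in{\mathcal A}^+$ then at every $ia\in i\R$ either $\phi$ has a limit or $\Re\phi\to+\infty$ --- and then applies it to the \emph{bounded} symbol
\[
\phi(s)=A+1+\exp\!\Big(-\tfrac{1+2^{-s}}{1-2^{-s}}\Big),
\]
which oscillates near $s=0$ while $\Re\phi$ stays in $(A,A+2)$. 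You instead take the simpler \emph{unbounded} Cayley map $\phi(s)=A+(1+2^{-s})/(1-2^{-s})$ and verify the discontinuity of $N^{-f}$ on $\partial\D$ by direct computation of the boundary values, bypassing the lemma. Your route is shorter and entirely self-contained; the paper's route buys a reusable criterion and the extra information that even a bounded $\phi$ (hence one in ${\mathscr H}^\infty$) can fail.
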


That will follow from the following result. 

\begin{lemma} \label{limit lemma}
Let $N \geq 2$ and let $\phi \colon \C_{\, 0} \to \C$ be an analytic function such that $N^{- \phi} \in {\mathcal A}^+$. Then, for every $a \in \R$, either $\phi (s)$ 
has a limit as $s$ tends to $i a$, or $\Re \phi (s)$ tends to $+ \infty$ as $s$ tends to $i a$. 
\end{lemma}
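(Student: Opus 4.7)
My plan is to exploit the fact that $N^{-\phi}$, being in ${\mathcal A}^+$, extends to a continuous function on the closed half‑plane $\overline{\C_{\,0}}$, and then to split into two cases according to the value of this extension at $ia$.

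First I would write $g(s) := N^{-\phi(s)} = \sum_{n=1}^\infty a_n n^{-s}$ with $\sum_n |a_n| < +\infty$. Since the series converges normally on $\overline{\C_{\,0}}$, the function $g$ is continuous on $\overline{\C_{\,0}}$; in particular it has a well-defined value $g(ia) \in \C$, and $g(s) \to g(ia)$ as $s \to ia$ with $s \in \C_{\,0}$.

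Next comes the dichotomy. If $g(ia) = 0$, then $|g(s)| = N^{-\Re \phi(s)} \to 0$, which forces $\Re \phi(s) \to +\infty$, giving the second alternative. If on the contrary $g(ia) \neq 0$, then by continuity $g$ does not vanish on $U \cap \overline{\C_{\,0}}$ for some open disk $U$ centred at $ia$; I may assume $U \cap \C_{\,0}$ is simply connected. On this simply connected open set, $g$ is a non-vanishing holomorphic function, so it admits a holomorphic logarithm $\ell(s)$, which (by continuity up to the boundary of $g$) extends continuously to $U \cap \overline{\C_{\,0}}$. On the other hand, $-\phi(s) \log N$ is also a holomorphic logarithm of $g(s)$ on $U \cap \C_{\,0}$, so the two differ by a locally constant integer multiple of $2\pi i$; by connectedness there is a constant $k \in \Z$ with
\begin{displaymath}
\phi(s) = -\frac{\ell(s)}{\log N} + \frac{2 \pi i k}{\log N} \qquad (s \in U \cap \C_{\,0}) \, .
\end{displaymath}
Since $\ell$ is continuous up to $ia$, it follows that $\phi(s)$ has a finite limit as $s \to ia$, which is the first alternative.

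The main subtlety, and essentially the only one, is the branch-of-logarithm step: I need to ensure that I can choose the logarithm of $g$ continuously up to the point $ia$ itself, not merely on an interior neighbourhood. This is handled by shrinking $U$ so that $g$ avoids zero on the compact set $U \cap \overline{\C_{\,0}}$ (using continuity of $g$ on the closed half-plane and $g(ia) \neq 0$), and then taking a simply connected neighbourhood of $ia$ inside $\C_{\,0}$ on which a holomorphic logarithm exists; continuity at $ia$ is automatic from the non-vanishing of $g$ there. Once this is done, the identification of $\phi$ with $-\ell / \log N$ up to an additive constant is immediate from the injectivity of the exponential modulo $2\pi i$.
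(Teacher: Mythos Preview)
Your proof is correct and follows essentially the same route as the paper's: use the continuity of $N^{-\phi}$ on $\overline{\C_{\,0}}$ coming from membership in ${\mathcal A}^+$, split according to whether the boundary value at $ia$ is zero, and in the non-zero case recover $\phi$ from a continuous branch of the logarithm of $N^{-\phi}$ near $ia$. The only cosmetic difference is that the paper takes the branch of $\log$ in the \emph{target} disk $D(c,r)$ (which makes continuity at $ia$ immediate), whereas you take it on the simply connected \emph{domain} $U\cap\C_{\,0}$ and then argue separately for continuity at $ia$; both are fine.
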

\begin{proof}
Since $N^{- \phi}$ belongs to ${\mathcal A}^+$, it is continuous on $\overline{\C_{\, 0}^{\phantom{l}}}$; hence it has limits at every point $i a \in i \R$. If this limit 
is $0$, that means that $\Re \phi (s) \converge_{s \to i a} + \infty$. If not, we have $\lim_{s \to i a} N^{- \phi (s)} = c \neq 0$. Therefore, if $r < |c |$, there is some 
open disk $V$ centered at $i a$ such that $N^{- \phi (s)} \in D (c, r)$ when $s \in V$. Let $F$ be a determination of the logarithm in $D (c, r)$. Then 
\begin{displaymath}
\psi (s) := F [ N^{- \phi (s)}] \converge_{s \to i a} F (c) \, . 
\end{displaymath}
Since 
\begin{displaymath}
\exp [ - \phi (s) \, \log N ] = N^{- \phi (s)} = \exp [\psi (s)] \, , 
\end{displaymath}
there exists $k = k (s) \in \Z$ such that $\psi (s) = - \phi (s) \, \log N + 2 k (s) \pi i$. But $\phi$ and $\psi$ are continuous on $V \cap \C_{\, 0}$; it follows that 
$k (s)$ is constant. Therefore 
\begin{displaymath}
\phi (s) = - \psi (s) + 2 k \pi i \converge_{s \to i a} - F (c) + 2 k \pi i \, . \qedhere
\end{displaymath}
\end{proof}
\begin{proof} [Proof of Theorem~\ref{limit theorem}]
Let 
\begin{displaymath}
\phi (s) =A + 1 + \exp \bigg( - \frac{1 + 2^{- s}}{1 - 2^{- s}} \bigg) \, \cdot
\end{displaymath}
Then $\phi$ is a convergent Dirichlet series and maps $\C_{\, 0}$ into $\C_{\, A}$. However, $N^{- \phi} \notin {\mathcal A}^+$ because $\phi$ does not have a limit 
as $s$ goes to $0$. 
\end{proof}
%

\bigskip

\noindent{\bf Acknowledgement.} This work was begun in june 2022, during an invitation of the two first-named authors by the IMUS of the Sevilla university. 
It is a great pleasure to us to thanks all people who made this stay possible and very pleasant, and especially Manuel Contreras. It has been completed during a 
visit of the third-named author in the Universit\'e de Lille and Universit\'e d'Artois in september 2022. 
\smallskip 

It was partially supported by the project PGC2018-094215-B-I00 (Spanish Ministerio de Ciencia, Innovaci\'on y Universidades, and FEDER funds). 
 
\goodbreak

\smallskip\goodbreak

{\footnotesize
Daniel Li \\ 
Univ. Artois, UR~2462, Laboratoire de Math\'ematiques de Lens (LML), F-62\kern 1mm 300 LENS, FRANCE \\
daniel.li@univ-artois.fr
\smallskip

Herv\'e Queff\'elec \\
Univ. Lille Nord de France, USTL,  
Laboratoire Paul Painlev\'e U.M.R. CNRS 8524, F-59\kern 1mm 655 VILLENEUVE D'ASCQ Cedex, FRANCE \\
Herve.Queffelec@univ-lille.fr
\smallskip
 
Luis Rodr{\'\i}guez-Piazza \\
Universidad de Sevilla, Facultad de Matem\'aticas, Departamento de An\'alisis Matem\'atico \& IMUS,  
Calle Tarfia s/n  
41\kern 1mm 012 SEVILLA, SPAIN \\
piazza@us.es
}

\end{document}